\date{\today}
\newcommand{\Z}{{\mathbb Z}}
\newcommand{\R}{{\mathbb R}}
\newcommand{\tg}{\tilde g}
\newcommand{\tG}{\tilde G}
\DeclareMathOperator\Homeo{Homeo}
\newtheorem{theorem}{Theorem}[section]
\newtheorem*{theorem*}{Theorem}
\newtheorem{lemma}[theorem]{Lemma}
\newtheorem{prop}[theorem]{Proposition}
\newtheorem{coro}[theorem]{Corollary}
\theoremstyle{definition}
\newtheorem{remark}[theorem]{Remark}
\title[Log H\"older continuity of the rotation number]{Log H\"older continuity of the rotation number}
\author[A.\ Gorodetski]{Anton Gorodetski}
\address{Department of Mathematics, University of California, Irvine, CA~92697, USA}
\email{asgor@uci.edu}
\thanks{A.\ G.\ was supported in part by NSF grant DMS--2247966.}
\author[V. Kleptsyn]{Victor Kleptsyn}
\address{CNRS, Institute of Mathematical Research of Rennes, IRMAR, UMR 6625 du CNRS}
\email{victor.kleptsyn@univ-rennes1.fr}
\thanks{V.K. was supported in part by ANR Gromeov (ANR-19-CE40-0007) and by Centre Henri Lebesgue (ANR-11-LABX-0020-01)}
\begin{document}

\maketitle

\

\begin{abstract}
We consider one-parameter families of smooth circle cocycles over an ergodic transformation in the base, and show that their rotation numbers must be log-H\"older regular with respect to the parameter. As an immediate application, we get a dynamical proof of 1D version of the Craig-Simon theorem that establishes 
that the integrated density of states of an ergodic Schr\"odinger operator must be log-H\"older.
\end{abstract}

\section{Introduction}\label{s.intro}

The rotation number of a homeomorphism or diffeomorphism of the circle was introduced by H.\,Poincare~\cite{Po} in 1885. Since then this notion was extensively studied, e.g. see \cite[Sections 11, 12]{KH} for a modern exposition of the main results. In particular, in the case when the circle diffeomorphism depends on a parameter $a\in \mathbb{R}^1$, properties of the rotation number $\rho$ as a function of the parameter, $\rho=\rho(a)$,  is a classical topic in dynamical systems.

In many cases the graph of function $\rho(a)$ turns out to be a ``devil's staircase'', with many fascinating properties. It was shown that, under suitable conditions, the function $\rho(a)$ must be continuous, but in general not Lipschitz \cite{Arn, Her1}, 
generically of bounded variation \cite{Bru}, and H\"older continuous \cite{Gr}. H\"older continuity of $\rho(a)$ for families of diffeomorphisms 
 with a critical point was established in \cite{GrS}, see also \cite{Kh}.  Increasingly refined results on the properties of  $\rho(a)$ appear up to this day \cite{Mat}.

Here we consider the rotation number as a function of a parameter not for one circle map, but for a cocycle over an ergodic transformation in a base, with smooth circle maps on the fibers; we provide the formal setting in Section \ref{s.2}. It turns out that in this case the rotation number does not have to be a H\"older continuous function of the parameter, but must be log-H\"older, see Theorem \ref{t.main} below for the formal statement.

 Our initial motivation for writing this paper came from an attempt to understand a dynamical meaning of the famous Craig-Simon Theorem from spectral theory \cite{CS, cs}, and Theorem \ref{t.main} can be interpreted as its non-linear version. The Craig-Simon Theorem  claims that the integrated density of states of an ergodic family of discrete Sch\"odinger operators must be  log-H\"older continuous, and can be reformulated as a statement about the rotation number of a projectivization of the Schr\"odinger cocycle that depends on energy as a parameter. In this way, Theorem \ref{t.main} provides a 1D version of the Craig-Simon Theorem as an immediate corollary. The original proof of the Craig-Simon Theorem in \cite{CS, cs} used very different arguments, and was based on the so called Thouless formula. A beautiful dynamical version of the Thouless formula was derived recently in \cite{BCDFK}. In particular, under suitable technical conditions, it implies log-H\"older continuity of the rotation number for general affine one-parameter families of projective cocycles, not just Schr\"odinder cocycles, see \cite[Proposition 5.1]{BCDFK}, which is also a partial case of Theorem \ref{t.main}.

In Section \ref{s.2} below we provide the setting and formulate and prove the main result, Theorem \ref{t.main}. In Section \ref{s.3} we give the background from spectral theory needed to formulate the Craig-Simon Theorem, and explain its relation to Theorem \ref{t.main}. Also, referring to known results in spectral theory, we notice that Theorem \ref{t.main} is essentially optimal.

\section{Preliminaries and Main Result}\label{s.2}

\subsection{Preliminaries}
Suppose that $\frak{M}$ is a compact metric space, $\sigma:\frak{M}\to \frak{M}$ is a homeomorphism, and $\mu$ is an ergodic invariant Borel probability measure supported on $\frak{M}$. Assume also that we are given a continuous map $g_{\cdot}:\frak{M}\to {\Homeo}^+(S^1)$, where by $\Homeo^+(S^1)$ we denote the space of orientation-preserving homeomorphisms of the circle, where $S^1=\mathbb{R}/\mathbb{Z}$ denotes the circle. Then, one can consider an associated skew product
$$
F:(\omega, x)\mapsto (\sigma \omega, g_\omega (x)).
$$

Next, let us choose for every $\omega\in\frak{M}$ a lift $\tilde g_\omega:\mathbb{R}\to \mathbb{R}$ of the map $g_\omega\in Homeo^+(S^1)$,
$$
g_\omega(\pi(x))=\pi(\tilde g_\omega(x)),
$$
where $\pi:\mathbb{R}^1\to S^1$ 
 is a natural covering map, in such a way that $\{\tilde g_\omega(0)\}$ is a bounded measurable (in $\omega$) function (e.g. one can require $g_\omega(0)\in [0,1)$ for all $\omega\in \frak{M}$). We then can consider the associated lift of the skew product:
\begin{equation}\label{e.cocycle}
\tilde F:(\omega, x)\mapsto (\sigma \omega, \tilde g_\omega (x)).
\end{equation}

Finally, let $G_{m,\omega}$ and $\tilde G_{m,\omega}$ be the length $m$ fiberwise compositions associated to these skew products:
$$
F^m (\omega,x)=(\sigma^m \omega, G_{m,\omega}(x)), \quad \tilde F^m (\omega,x)=(\sigma^m \omega, \tilde G_{m,\omega}(x)),
$$
so that for $m>0$ we have
$$
\tilde G_{m,\omega}=\tilde g_{\sigma^{m-1}\omega}\circ \ldots \circ \tilde g_{\sigma\omega}\circ\tilde g_{\omega}.
$$

The following statement is well known (e.g. see \cite[Section 5]{Her}, \cite{R2}, or \cite[Appendix A]{GK}),
\begin{prop}\label{p.rotnum}
There exists a number $\rho\in \mathbb{R}$ such that for $\mu$-a.e. $\omega\in \frak{M}$ and every $x\in \mathbb{R}$ the limit
\begin{equation}\label{eq:tG}
\lim_{n\to \infty}\frac{1}{n} (\tilde G_{n,\omega} (x)-x)
\end{equation}
exists and is equal to $\rho$.
\end{prop}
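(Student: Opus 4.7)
The plan is to apply Kingman's subadditive ergodic theorem to the sequence $a_n(\omega) := \tilde G_{n,\omega}(0)$, and then transfer the resulting limit from the base point $0$ to an arbitrary $x\in\mathbb{R}$ using the fact that lifts of orientation-preserving circle homeomorphisms commute with the integer translation $x\mapsto x+1$.

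The first step is to establish approximate additivity of $\{a_n\}$ along the cocycle. The iteration formula $\tilde G_{n+m,\omega} = \tilde G_{m,\sigma^n\omega}\circ \tilde G_{n,\omega}$ gives $a_{n+m}(\omega) = \tilde G_{m,\sigma^n\omega}(a_n(\omega))$. Because each $\tilde g_\omega$ is a lift of an orientation-preserving homeomorphism, it satisfies $\tilde g_\omega(\cdot+1) = \tilde g_\omega(\cdot)+1$, a property inherited by every composition $\tilde G_{m,\omega}$. Combining this equivariance with monotonicity, one checks that for every such lift $\tilde H$ and every $y\in\mathbb{R}$,
\begin{equation*}
|\tilde H(y) - \tilde H(0) - y| \leq 1,
\end{equation*}
since both $y$ and $\tilde H(y)-\tilde H(0)$ lie in $[\lfloor y\rfloor,\lfloor y\rfloor + 1]$. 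Applied with $\tilde H = \tilde G_{m,\sigma^n\omega}$ and $y = a_n(\omega)$, this yields
\begin{equation*}
|a_{n+m}(\omega) - a_n(\omega) - a_m(\sigma^n\omega)| \leq 1,
\end{equation*}
so that $\{a_n + 1\}$ is subadditive and $\{a_n - 1\}$ is superadditive with respect to $\sigma$.

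Since $a_1(\omega) = \tilde g_\omega(0)$ is bounded and measurable by hypothesis, both sequences are in $L^1(\mu)$ and the (super/sub)additive ergodic theorems apply. Their conclusions imply that $\tfrac{1}{n} a_n(\omega)$ converges $\mu$-almost everywhere to a $\sigma$-invariant measurable function, which by ergodicity of $\mu$ is constant and equal to some $\rho\in\mathbb{R}$.

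Finally, to promote the limit to arbitrary $x\in\mathbb{R}$, I apply the same elementary estimate to $\tilde H = \tilde G_{n,\omega}$ to obtain $|\tilde G_{n,\omega}(x) - \tilde G_{n,\omega}(0) - x| \leq 1$, uniformly in $n$, $\omega$, and $x$. Dividing by $n$ and passing to the limit on the full-measure set where $a_n(\omega)/n \to \rho$ gives the limit in \eqref{eq:tG} for every $x$ simultaneously. The main technical point is the bookkeeping of the bounded error in the almost-additivity; once the integer-equivariance of lifts is exploited it reduces to the one-line estimate above.
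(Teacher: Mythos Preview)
Your argument is correct. The paper itself does not give a proof of this proposition; it simply records it as well known and refers to \cite[Section~5]{Her}, \cite{R2}, and \cite[Appendix~A]{GK}. What you wrote is precisely the standard argument found in those references: the integer-equivariance of lifts yields the almost-additivity estimate
\[
|a_{n+m}(\omega) - a_n(\omega) - a_m(\sigma^n\omega)| \le 1,
\]
and then Kingman's theorem (or, equivalently, Birkhoff applied after telescoping the displacement) produces the a.e.\ limit, with ergodicity forcing it to be constant. The final transfer from $x=0$ to arbitrary $x$ via the same one-line estimate is also the usual step.

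Two very minor points you might tighten if this is to stand alone: (i) boundedness of $a_1$ immediately gives $|a_n|\le Cn$, so the Kingman limit is automatically finite and equals a real number~$\rho$; (ii) measurability of $\omega\mapsto a_n(\omega)$ for $n\ge 2$ follows from the joint measurability of $(\omega,x)\mapsto \tg_\omega(x)$, which in turn comes from continuity of $\omega\mapsto g_\omega$ together with the measurable choice of $\tg_\omega(0)$. Neither is a gap, just bookkeeping.
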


 The number $\rho$ from Proposition \ref{p.rotnum} is called \emph{the rotation number}.
Notice that the rotation number $\rho$ depends on the choice of lifts $\tilde g_\omega$.

\begin{remark}
 It can happen that the lifts $\{\tilde g_\omega\}$ cannot be taken continuous in~$\omega$. At the same time, in the case when $\{g_\omega\}$ are projectivizations of the transfer matrices of a Schr\"odinger cocycle defined by a continuous potential, the lifts $\{\tilde g_\omega\}$ can always be chosen continuously in $\omega$ (since any Schr\"odinger cocycle is homotopic to a constant one).
 \end{remark}

\begin{remark}
 Some of the assumptions in Proposition \ref{p.rotnum} can be essentially relaxed. For example, one can start with a probability space $(\frak{M}, \mu)$ and a measure preserving transformation $\sigma$ instead on a measure preserving homeomorphism of a compact metric space. 
  To keep the presentation more transparent, we are not trying to give the statements in the most general form.
\end{remark}

Let us now consider the dependence of the rotation number on a parameter. Namely, assume now that we are given a continuous
family $g_{\cdot,\cdot}: J\times \frak{M} \to \Homeo_+(S^1)$ of maps as above; here $J\subseteq \mathbb{R}^1$ is a closed interval of parameters.  Then, we can consider their lifts $\tg_{a,\omega}:\R\to\R$ to be chosen continuously in parameter $a\in J$. The corresponding skew products $F_a$ and $\tilde F_a$ as well as the fiberwise compositions
$G_{n,a,\omega}$ and $\tilde G_{n,a,\omega}$ then can be defined in the same way as before.

%

 An important note is  that the \emph{increments} of the images $\tG_{n,a',\omega}(x)-\tG_{n,a,\omega}(x)$ do not depend on a particular choice of lifts $\tg_{a,\omega}$. Moreover, this increment is  \emph{continuous} in $\omega$ and $x$ (and in fact is a well-defined function of the point~$x$ on the circle, not only on the real line). Also, dividing by $n$ and passing to the limit, one gets that the difference of the corresponding rotation numbers $\rho(a')-\rho(a)$ does not depend on the choice of lifts~$\tg_{a,\omega}$, thus getting the following important note.

\begin{remark}
Even though the rotation number $\rho$ depends on a particular choice of the lifts $\tg_{a,\omega}$, the differences of rotation numbers $\rho(a')-\rho(a)$ do not. In particular, different choice of lifts $\tilde g_{a, \omega}$ leads to a shift of the rotation number $\rho(a)$ by a constant, and intervals in the space of parameters where  $\rho$ is constant are independent of the choice of the lifts.
\end{remark}

\subsection{Main result}
Here is the main result of this paper:

\begin{theorem}\label{t.main}
Assume that the cocycle (\ref{e.cocycle}) smoothly depends on the parameter $a\in J$ and satisfies the following:

\vspace{3pt}

1) The range of parameters is a closed interval $J\subset \mathbb{R}$, and for some uniform (in $x\in \mathbb{R}$, $\omega\in \frak{M}, a\in J$) constant $C>0$ one has
$$
\left|\frac{\partial \tg_{a, \omega}(x)}{\partial a}\right|\le C;
$$

\vspace{3pt}

2) If we set $M_\omega=\max\left\{2, \max_{x\in \mathbb{R}, \, a\in J}\left|\frac{\partial\tg_{a,\omega}(x)}{\partial x}\right|\right\}$, then
$$
\int_{\frak{M}}\log M_\omega\, d\mu(\omega)<\infty.
$$
Then the rotation number as a function of the parameter is log-H\"older continuous, i.e. there exists $R>0$ such that for any $a, a'\in J$ with $|a-a'|\le 1/2$  we have
\begin{equation}\label{e.logH}
|\rho(a')-\rho(a)|\le R\left(\log|a'-a|^{-1}\right)^{-1}
\end{equation}
\end{theorem}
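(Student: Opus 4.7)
The strategy combines a near-additivity argument for the orbit $\tilde G_{n,a,\omega}(0)$ with a pointwise chain-rule bound on its $a$-derivative, then optimizes the composition length $n$ as a function of $|a-a'|$.

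\textit{Step 1 (Approximation by $\mu$-averages).} Define $\rho_n(a) := n^{-1}\int_{\frak{M}} \tilde G_{n,a,\omega}(0)\,d\mu(\omega)$. I would show $|\rho(a) - \rho_n(a)| \le 1/n$ as follows. Since $y \mapsto \tilde G_{m,a,\eta}(y) - y$ is $1$-periodic with oscillation at most $1$ (using monotonicity of the lifts and the period-$1$ lifting condition), the pointwise near-additivity
\[
\bigl|\tilde G_{n+m,a,\omega}(0) - \tilde G_{n,a,\omega}(0) - \tilde G_{m,a,\sigma^n\omega}(0)\bigr|\le 1
\]
holds. Integrating against $\mu$ and using $\sigma$-invariance yields $|S_{n+m} - S_n - S_m|\le 1$ for $S_n := n\rho_n(a)$. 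A Fekete-type dyadic iteration comparing $S_{2^k n}$ to $2^k S_n$ then gives the uniform rate $|\rho(a) - \rho_n(a)|\le 1/n$.

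\textit{Step 2 (Pointwise chain-rule bound).} Telescoping the composition by switching one fiber map at a time from parameter $a'$ to $a$ and using the mean value theorem for the outer $\tilde g_{a'}$-composition, I would derive
\[
|\tilde G_{n,a,\omega}(0) - \tilde G_{n,a',\omega}(0)| \le C'\,|a-a'|\,Q_n(\omega),\qquad Q_n(\omega):=\prod_{j=0}^{n-1}M_{\sigma^j\omega},
\]
where $C'$ depends only on $C$ from Hypothesis 1 (the geometric series arising in the telescoping converges because $M_\omega\ge 2$).

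\textit{Step 3 (Tail control via maximal ergodic inequality).} Since $\log M_\omega\ge\log 2>0$ and $\log M_\omega\in L^1(\mu)$, the maximal ergodic theorem applied to the Birkhoff sum $\log Q_n=\sum_{j=0}^{n-1}\log M_{\sigma^j\omega}$ yields, for every $K>0$,
\[
\mu(E_K^c)\le \lambda/K, \qquad E_K:=\Bigl\{\omega:\sup_{n\ge 1}\tfrac{1}{n}\log Q_n(\omega)\le K\Bigr\},\quad \lambda:=\int\log M\,d\mu.
\]
On $E_K$ one has $Q_n(\omega)\le e^{nK}$ simultaneously for all $n$.

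\textit{Step 4 (Combine and optimize).} Splitting $\int|\tilde G_{n,a,\omega}(0) - \tilde G_{n,a',\omega}(0)|\,d\mu$ over $E_K$ (using Step 2) and $E_K^c$ (using the trivial bound $|\tilde G_{n,a,\omega}(0)-\tilde G_{n,a',\omega}(0)|\le 4n$ coming from $|\tilde g_{a,\omega}(y)-y|\le 2$), I would obtain
\[
|\rho_n(a)-\rho_n(a')|\le \frac{C'|a-a'|\,e^{nK}}{n}+\frac{4\lambda}{K},
\]
and combined with Step 1,
\[
|\rho(a)-\rho(a')|\le \frac{2}{n}+\frac{C'|a-a'|\,e^{nK}}{n}+\frac{4\lambda}{K}.
\]
Finally, one chooses $n$ and $K$ as functions of $|a-a'|$, with $nK\asymp \log|a-a'|^{-1}$ so that the middle term is essentially $C'/n$, and with both $n$ and $K$ tending to infinity as $|a-a'|\to 0$, to extract the claimed bound of order $R/\log|a-a'|^{-1}$.

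\textit{Main obstacle.} The principal difficulty is that $\int Q_n\,d\mu$ may be infinite under only $\log M\in L^1(\mu)$; indeed, $E[M_\omega]=E[e^{\log M_\omega}]$ can easily diverge when $\log M_\omega$ has heavy tails. Consequently, one cannot integrate the pointwise bound of Step 2 directly, and the splitting in Step 4 is forced. The tail estimate $\mu(E_K^c)\le \lambda/K$ from Markov applied to the Birkhoff sum $\log Q_n$ is quite weak, so the optimization balancing the three terms above is delicate, and extracting the sharp $1/\log|a-a'|^{-1}$ rate (rather than a weaker $1/\sqrt{\log|a-a'|^{-1}}$ that a naive balance would produce) is the technically most demanding point. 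I expect the authors' proof to exploit the fact that on the good set $E_K$ the bound on $Q_n$ holds simultaneously for all $n$, allowing $n$ and $K$ to be coupled more cleverly than in a single pointwise-at-$n$ application of Markov.
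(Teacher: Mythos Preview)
Your Steps~1--3 are correct, but the proposal does not close: as you yourself note, the optimization in Step~4 only yields a bound of order $\bigl(\log|a-a'|^{-1}\bigr)^{-1/2}$, not the required $\bigl(\log|a-a'|^{-1}\bigr)^{-1}$. Concretely, with the constraint $nK\asymp L:=\log|a-a'|^{-1}$ (needed to keep the middle term $C'|a-a'|e^{nK}/n$ of size $O(1/n)$), the remaining expression $2/n+4\lambda/K$ is minimized at $n\asymp K\asymp\sqrt{L}$, giving $O(L^{-1/2})$. No cleverer coupling of $n$ and $K$ helps: to make $4\lambda/K$ of order $L^{-1}$ one needs $K\asymp L$, but then $nK\asymp L$ forces $n$ bounded, and $2/n$ does not vanish. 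The root cause is that Step~2's bound $|\tG_{n,a,\omega}(0)-\tG_{n,a',\omega}(0)|\le C'|a-a'|\,Q_n(\omega)$, while correct, is far too pessimistic: it treats the difference as growing multiplicatively along the whole orbit, ignoring that once the $a'$-orbit is a full integer ahead of the $a$-orbit, the periodicity of the lifts resets the comparison.

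The paper's argument is genuinely different and exploits exactly this reset. It works pointwise for a single Birkhoff-typical $\omega$ (no integration, no maximal inequality). Writing $x_n=\tG_{n,a,\omega}(0)$, $x'_n=\tG_{n,a',\omega}(0)$, the key lemma is that for every integer $j$,
\[
\delta+\max(0,x'_n-x_n-j)\;\le\;M_{\sigma^{n-1}\omega}\cdot\bigl(\delta+\max(0,x'_{n-1}-x_{n-1}-j)\bigr),
\]
with $\delta=C|a-a'|$. Thus, between the first time $n_j$ that $x'_n-x_n\ge j$ and the first time $n_{j+1}$ that $x'_n-x_n\ge j+1$, the quantity $d_{n,j}:=\delta+\max(0,x'_n-x_n-j)$ must grow from at most $2\delta$ to at least $1$, forcing
\[
\log\frac{1}{2\delta}\;\le\;\sum_{l=n_j+1}^{n_{j+1}}\log M_{\sigma^{l-1}\omega}.
\]
Summing over $j=0,\dots,J-1$ and using Birkhoff,
\[
J\,\log\frac{1}{2\delta}\;\le\;\sum_{l=1}^{n_J}\log M_{\sigma^{l-1}\omega}\;\le\;2\lambda\,n_J\quad\text{for large }J,
\]
whence $J/n_J\le 2\lambda/\log(1/2\delta)$. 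Since $\rho(a')-\rho(a)=\lim_n n^{-1}(x'_n-x_n)$ and $J\le x'_{n_J}-x_{n_J}\le J+1$, this gives the sharp rate directly, with $R=8\lambda$. The essential idea you are missing is this ``each full turn costs a fixed amount $\log(1/2\delta)$ of Birkhoff budget'' mechanism, which converts the problem from a multiplicative estimate on $Q_n$ into an additive count of turns.
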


\begin{remark}
The restriction on the modulus of continuity given by (\ref{e.logH}) is optimal, and cannot be improved without restriction of the class of cocycles under consideration. See the last paragraph of Section \ref{s.3} for details.
\end{remark}
\begin{remark}
Instead of assuming smoothness of the maps $\tg_{a,\omega}(x)$ one can assume that they are Lipschitz, and replace $\left|\frac{d\tg_{a,\omega}(x)}{dx}\right|$ by the Lipschitz constant in the definition of $M_\omega$.
\end{remark}

\subsection{Proof of the main result}

\begin{proof}
Fix $a', a\in J$, and set $\delta=C|a'-a|$. It suffices to show that (\ref{e.logH}) holds for all $a, a'\in J$ that are sufficiently close to each other. Therefore, without loss of generality we can assume
\begin{equation}\label{e.aaC}
\delta<0.1\ \ \text{and}\ \ |a'-a|<C.
\end{equation}

Take any initial point $x_0=x_0'\in \R$, for instance, $x_0=x_0'=0$. Consider the sequences of its iterates associated to some $\omega\in \Omega$ and two different parameter values $a,a'\in J$, for $n\ge 1$
\[
x_{n}=\tG_{n, a, \omega}(x_0)=\tg_{a, \sigma^{n-1}(\omega)}(x_{n-1}),
\]
\[
x'_{n}=\tG_{n, a', \omega}(x_0)=\tg_{a', \sigma^{n-1}(\omega)}(x'_{n-1})
\]
(to simplify the notation, we do not indicate the dependence on~$\omega$ explicitly).

Then for $\mu$-a.e. $\omega\in \mathfrak{M}$ we have
\begin{equation}\label{e.rho}
\rho(a')-\rho(a)=\lim_{n\to \infty}\frac{1}{n}(x'_n - x_n).
\end{equation}

To prove~\eqref{e.logH}, we can assume without loss of generality that $\rho(a')>\rho(a)$, as the other case differs only by interchanging~$a$ and~$a'$.
The following lemma compares the evolution of distances between the two orbits (considered for the same $\omega$) :
\begin{lemma}\label{l.one}
For any $n=1,2,\dots$ and any integer $j$ the following holds:
\begin{equation}\label{eq:d-evolution}
x'_n-x_n - j  \le \delta + M_{\sigma^{n-1}\omega} \cdot \max (0,x'_{n-1}-x_{n-1} - j)
\end{equation}
\end{lemma}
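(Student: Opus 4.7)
The core idea is a one-step comparison in which the integer $j$ is absorbed using the fact that any lift of a circle homeomorphism commutes with integer translations, while the difference in parameter values is handled by hypothesis~(1) and the difference in fiberwise positions by the definition of $M_\omega$.

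I would start by abbreviating $\tg := \tg_{a,\sigma^{n-1}\omega}$ and $\tg' := \tg_{a',\sigma^{n-1}\omega}$, so that $x_n = \tg(x_{n-1})$ and $x'_n = \tg'(x'_{n-1})$. Since $\tg$ is the lift of a circle map, $\tg(y + j) = \tg(y) + j$ for every integer $j$. Hence
$$
x'_n - x_n - j \;=\; \tg'(x'_{n-1}) - \tg(x_{n-1}+j),
$$
and I would then add and subtract $\tg(x'_{n-1})$ to get
$$
x'_n - x_n - j \;=\; \bigl[\tg'(x'_{n-1}) - \tg(x'_{n-1})\bigr] \;+\; \bigl[\tg(x'_{n-1}) - \tg(x_{n-1}+j)\bigr].
$$

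For the first bracket, hypothesis~(1) of Theorem~\ref{t.main} and the mean value theorem (in the parameter variable) give the bound $C|a'-a| = \delta$. For the second bracket, I distinguish two cases according to the sign of $s := x'_{n-1} - x_{n-1} - j$. If $s \le 0$, then monotonicity of $\tg$ gives $\tg(x'_{n-1}) - \tg(x_{n-1}+j) \le 0$. If $s > 0$, the fiberwise Lipschitz bound $M_{\sigma^{n-1}\omega}$ coming from the definition of $M_\omega$ yields $\tg(x'_{n-1}) - \tg(x_{n-1}+j) \le M_{\sigma^{n-1}\omega} \cdot s$. In both cases the second bracket is bounded by $M_{\sigma^{n-1}\omega} \cdot \max(0, s)$, and summing the two estimates gives exactly~\eqref{eq:d-evolution}.

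There is no real obstacle: the only nonobvious move is inserting the integer $j$ inside the lift so that one can compare points on $\mathbb{R}$ rather than modulo~$1$. The substance of Theorem~\ref{t.main} will lie downstream of this lemma, when the recursion is iterated, $j$ is chosen so as to keep $\max(0, x'_{n-1}-x_{n-1}-j)$ under control, and Birkhoff's theorem is applied to $\log M_\omega$ (whose integrability is exactly hypothesis~(2)) in order to extract the logarithmic modulus of continuity of $\rho$.
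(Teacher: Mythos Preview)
Your proof is correct and essentially identical to the paper's: both introduce the intermediate point $\tg_{a,\sigma^{n-1}\omega}(x'_{n-1})$, bound the parameter change by $\delta$ via hypothesis~(1), absorb the integer $j$ using commutation of the lift with integer shifts, and then split into the same two cases (monotonicity when $s\le 0$, the derivative bound $M_{\sigma^{n-1}\omega}$ when $s>0$). The only cosmetic difference is that the paper shifts $x'_{n-1}$ by $-j$ whereas you shift $x_{n-1}$ by $+j$, which yields the same quantity.
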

\begin{proof}
Consider the point $y:=\tg_{a, \sigma^{n-1}(\omega)}(x'_{n-1})$. Applying the Lagrange theorem, due to the choice of $C$ we then have
\[
|x'_n-y| = \left| \tg_{a', \sigma^{n-1}(\omega)}(x'_{n-1}) - \tg_{a, \sigma^{n-1}(\omega)}(x'_{n-1}) \right|  \le C \cdot |a'-a| =\delta.
\]
Thus, to establish~\eqref{eq:d-evolution} it suffices to show that
\begin{equation}\label{eq:alt-goal}
y-x_n - j  \le M_{\sigma^{n-1}\omega} \cdot \max (0,x'_{n-1}-x_{n-1} - j)
\end{equation}
On the other hand, we have
\begin{multline*}
y-x_n - j = \tg_{a, \sigma^{n-1}(\omega)}(x'_{n-1}) - \tg_{a, \sigma^{n-1}(\omega)}(x_{n-1}) - j=  \\
= \tg_{a, \sigma^{n-1}(\omega)}(x'_{n-1}-j) -  \tg_{a, \sigma^{n-1}(\omega)}(x_{n-1}),
\end{multline*}
where the second equality uses the fact that $\tg_{a, \sigma^{n-1}(\omega)}$ commutes with integer shifts. 

\begin{figure}[h]
\includegraphics[width=0.3\textwidth]{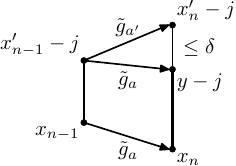}
\caption{Action of maps $\tg_{a, \sigma^{n-1}(\omega)}$ and $\tg_{a', \sigma^{n-1}(\omega)}$.  }
\end{figure}

If $x'_{n-1}-x_{n-1} - j\le 0$, then due to the monotonicity of $\tg_{a, \sigma^{n-1}(\omega)}$,
\[
y-j = \tg_{a, \sigma^{n-1}(\omega)}(x'_{n-1}-j) \le  \tg_{a, \sigma^{n-1}(\omega)}(x_{n-1}) = x_n,
\]
and~\eqref{eq:alt-goal} holds. Otherwise, we again apply the Lagrange theorem:
\[
y-x_n - j = \left. \frac{d\tg_{a, \sigma^{n-1}(\omega)}}{dx} \right|_{\xi} \cdot ((x_{n-1}'-j) - x_{n-1}) \le M_{\sigma^{n-1}\omega} \cdot \max (0,x'_{n-1}-x_{n-1} - j),
\]
where $\xi\in \mathbb{R}$ is a point between $x_{n-1}'-j$ and $x_{n-1}$. This completes the proof of Lemma \ref{l.one}.
\end{proof}
The conclusion of this lemma immediately implies the following:
\begin{coro}\label{c:steps}
Denote $d_{n,j}:= \delta+ \max(0,x'_n-x_n-j)$. Then
\[
d_{n,j} \le M_{\sigma^{n-1}\omega} \cdot d_{n-1,j}.
\]
\end{coro}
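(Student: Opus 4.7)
The plan is to deduce the corollary directly from Lemma~\ref{l.one} by a short case analysis on the sign of $x'_n - x_n - j$, leveraging two facts already baked into the setup: $d_{n-1,j}\ge \delta$ by the definition of $d_{\cdot,\cdot}$, and $M_{\sigma^{n-1}\omega}\ge 2$ by the ``$\max\{2,\dots\}$'' clause in the definition of $M_\omega$.

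I would first dispatch the trivial case $x'_n - x_n - j \le 0$, in which $d_{n,j}=\delta$ and the required inequality reduces to $\delta\le M_{\sigma^{n-1}\omega}\, d_{n-1,j}$; this is automatic since the right-hand side is at least $2\delta$. In the remaining case $x'_n - x_n - j > 0$, I substitute the bound from Lemma~\ref{l.one} into $d_{n,j} = \delta + (x'_n - x_n - j)$ to get
\[
d_{n,j} \;\le\; 2\delta + M_{\sigma^{n-1}\omega}\cdot \max(0,\; x'_{n-1} - x_{n-1} - j).
\]
The key regrouping is then to use $M_{\sigma^{n-1}\omega}\ge 2$ in the form $2\delta \le M_{\sigma^{n-1}\omega}\,\delta$, so that the right-hand side becomes
\[
M_{\sigma^{n-1}\omega}\bigl(\delta + \max(0,\; x'_{n-1} - x_{n-1} - j)\bigr) \;=\; M_{\sigma^{n-1}\omega}\, d_{n-1,j},
\]
as required.

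There is no serious obstacle at this step; the corollary is really just a bookkeeping reformulation of Lemma~\ref{l.one}. What is conceptually important is that the decision to include the constant $2$ in the definition of $M_\omega$ is precisely what converts the affine-in-$\delta$ estimate of the lemma into the clean multiplicative recursion stated here. Iterating will yield $d_{n,j}\le d_{0,j}\prod_{k=0}^{n-1} M_{\sigma^k\omega}$, and it is this product --- controlled via Birkhoff's theorem applied to the integrable function $\log M_\omega$ --- that one expects to drive the log-H\"older bound in Theorem~\ref{t.main}.
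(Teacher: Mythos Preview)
Your argument is correct and is exactly the short computation the paper has in mind when it says the corollary ``immediately'' follows from Lemma~\ref{l.one}; the paper simply omits the two-case check you wrote out, but the ingredients (the $\max\{2,\dots\}$ in the definition of $M_\omega$ absorbing the extra~$\delta$) are the same.
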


Consider now the sequence of the first moments $n_j$ when the orbit associated to~$a'$ goes $j$ full turns ahead of the one associated to $a$:
\[
n_j=\min\{n \ge 0 \mid \ x'_{n}\ge x_{n}+j\}.
\]
This sequence is well-defined for $\mu$-a.e. $\omega\in \mathfrak{M}$ due to~\eqref{e.rho} and the assumption $\rho(a')>\rho(a)$.
We have the following lemma:

\begin{lemma}\label{l.logsum}
For every $j\ge 0$
\[
\log  \frac{1}{2\delta} \le \sum_{l=n_{j}+1}^{n_{j+1}}\log M_{\sigma^{l-1}(\omega)}
\]
\end{lemma}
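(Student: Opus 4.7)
My plan is to fix the integer threshold at $j$ (not at $j+1$) in the quantity from Corollary \ref{c:steps}, write $d_n := d_{n,j} = \delta + \max(0, x'_n - x_n - j)$, and apply the corollary iteratively from time $n_j + 1$ up to time $n_{j+1}$. This telescopes to
\[
d_{n_{j+1}} \le \Bigl(\prod_{l=n_j+1}^{n_{j+1}} M_{\sigma^{l-1}(\omega)}\Bigr) \cdot d_{n_j},
\]
so after taking logarithms the entire lemma reduces to proving the ratio bound $d_{n_{j+1}}/d_{n_j} \ge 1/(2\delta)$. The reason I choose the threshold to be $j$ rather than $j+1$ is exactly so that the range of the product matches the range of summation in the lemma.

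The lower bound on $d_{n_{j+1}}$ is immediate: by the defining property of $n_{j+1}$ we have $x'_{n_{j+1}} - x_{n_{j+1}} \ge j + 1$, so $d_{n_{j+1}} \ge 1 + \delta \ge 1$.

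The heart of the proof is the matching upper bound $d_{n_j} \le 2\delta$. For $j = 0$ we have $n_0 = 0$ and $x_0 = x'_0 = 0$, so $d_0 = \delta$ directly. For $j \ge 1$, minimality of $n_j$ forces the strict inequality $x'_{n_j - 1} - x_{n_j - 1} < j$, so the "elastic" term in Lemma \ref{l.one} applied at $n = n_j$ simply vanishes, yielding
\[
x'_{n_j} - x_{n_j} - j \le \delta + M_{\sigma^{n_j - 1}(\omega)} \cdot 0 = \delta;
\]
combined with the lower bound $x'_{n_j} - x_{n_j} - j \ge 0$, this gives $d_{n_j} \le 2\delta$ exactly. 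Chaining the two estimates through the product produces $1 \le d_{n_{j+1}} \le 2\delta \prod_{l=n_j+1}^{n_{j+1}} M_{\sigma^{l-1}(\omega)}$, and taking logs gives the desired inequality.

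The one subtle point I anticipate being the main obstacle is precisely this overshoot bound: one needs to notice that the crossing overshoot $x'_{n_j} - x_{n_j} - j$ is automatically controlled by $\delta$, rather than by $M \cdot \delta$. The naive alternative bound $d_{n_j} \le M_{\sigma^{n_j - 1}(\omega)} \cdot \delta$, obtained by a single step of Corollary \ref{c:steps} starting from $d_{n_j - 1} = \delta$, would contain a factor $M_{\sigma^{n_j - 1}(\omega)}$ that lies \emph{outside} the range of summation on the right-hand side of the lemma, so the telescoping estimate would leak exactly one factor of $M$ and ruin the conclusion. Going back to Lemma \ref{l.one} directly, and observing that its elastic term vanishes at the crossing time by minimality of $n_j$, is what eliminates this stray factor.
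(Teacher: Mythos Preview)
Your proof is correct and follows exactly the paper's approach: telescope Corollary~\ref{c:steps} from $n_j+1$ to $n_{j+1}$, bound $d_{n_{j+1},j}\ge 1$ from the definition of $n_{j+1}$, and bound $d_{n_j,j}\le 2\delta$ via Lemma~\ref{l.one}. The paper compresses the last step into the phrase ``due to Lemma~\ref{l.one}''; you have spelled out precisely what that means (minimality of $n_j$ kills the elastic term), including the $j=0$ base case and the observation that using Corollary~\ref{c:steps} alone would leak a stray factor of $M$.
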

\begin{proof}
Applying Corollary~\ref{c:steps}, we get
\[
d_{n_{j+1},j} \le \left(\prod_{l=n_{j}+1}^{n_{j+1}} M_{\sigma^{l-1}(\omega)} \right) d_{n_j,j}.
\]
However, $d_{n_{j+1},j}\ge 1$ by definition, while $d_{n_j,j}\le 2\delta$ due to Lemma~\ref{l.one}. Taking the logarithm concludes the proof.
\end{proof}

We are now ready to complete the proof of Theorem~\ref{t.main}. Namely, for every $\mu$-regular point $\omega\in \mathfrak{M}$ we have
$$
\frac{1}{n}\sum_{l=1}^n \log M_{\sigma^{l-1}(\omega)}\to \int_{\frak{M}}\log M_{\omega'}d\mu(\omega')
$$
as $n\to \infty$. In particular, for all sufficiently large values of $n\in \mathbb{N}$ we have
$$
\frac{1}{n}\sum_{l=1}^n \log M_{\sigma^{l-1}(\omega)}<2\int_{\frak{M}}\log M_{\omega'}d\mu(\omega').
$$
Hence for all large enough $j\in \mathbb{N}$
\begin{multline*}
\frac{j}{n_j}\log \frac{1}{2\delta} \le \\ \le \frac{1}{n_j}\left(\sum_{l=1}^{n_1} \log M_{\sigma^{l-1}(\omega)}+\sum_{l=n_1+1}^{n_2} \log M_{\sigma^{l-1}(\omega)}+\ldots+ \sum_{l=n_{j-1}+1}^{n_j} \log M_{\sigma^{l-1}(\omega)}\right)=
\\
=\frac{1}{n_j}\sum_{l=1}^{n_j} \log M_{\sigma^{l-1}(\omega)}<2\int_{\frak{M}}\log M_{\omega'}d\mu(\omega').
\end{multline*}
By definition of $\{n_j\}$ we have
$
j\le |x'_{n_j}- x_{n_j}|\le j+1,
$
 and hence
 $$
 \frac{1}{n_j}|x_{n_j}'- x_{n_j}|\le \frac{2j}{n_j}\le \left(4\int_{\frak{M}}\log M_{\omega'}d\mu(\omega')\right)(\log \frac{1}{2\delta} )^{-1}
 $$
 Taking into account (\ref{e.rho}), this implies that
 \begin{equation}\label{eq:M}
 |\rho(a')-\rho(a)|\le \left(4\int_{\frak{M}}\log M_{\omega'}d\mu(\omega')\right)(\log \frac{1}{2\delta} )^{-1}.
 \end{equation}
Finally,
 \[
  \log \frac{1}{2\delta} = \log |a-a'|^{-1} - \log 2C > \frac{1}{2} \log |a-a'|^{-1}
 \]
 once $|a-a'|<e^{-4C}$, hence for such $a,a'$ the estimate~\eqref{eq:M} implies
 \[
 |\rho(a')-\rho(a)| \le R \, (\log|a'-a|^{-1})^{-1},
 \]
 where $R=8\int_{\frak{M}}\log M_{\omega'}d\mu(\omega')$.

 Due to compactness of $J$, the same inequality holds also if we remove the assumption of $a,a'$ being sufficiently close, possibly with larger value of constant $R$. This completes the proof of Theorem~\ref{t.main}.
\end{proof}

\section{The Craig-Simon Theorem on log-H\"older regularity of the IDS}\label{s.3}

In this section we explain that an application of Theorem \ref{t.main} to the Schr\"odinger cocycle of the corresponding 1D ergodic Schr\"odinger operator immediately implies that the integrated density of states must be log-H\"older regular, therefore providing a purely dynamical proof of the classical Craig-Simon result in spectral theory \cite[Theorem 5.2]{cs}. For the modern presentation of all the necessary background in the theory of ergodic Schr\"odinger operators, see \cite{DF1} and \cite{DF2}.

To define an ergodic family of discrete Schr\"odinger operators,  let us consider a homeomorphism $\sigma$ of a compact metric space $\frak{M}$,  an ergodic invariant Borel probability measure $\mu$ on $\frak{M}$, 
and a measurable function $f : \frak{M} \to \R$.
One associates a family of discrete Schr\"odinger operators on the line as follows: For $\omega \in \frak{M}$, the potential $V_\omega : \Z \to \R$ is given by $V_{\omega}(n) = f(\sigma^n \omega)$ and the operator
$H_\omega$ in $\ell^2(\Z)$ acts as
\begin{equation}\label{e.schrod}
[H_\omega \phi](n) = \phi(n+1) + \phi(n-1) + V_\omega(n) \phi(n).
\end{equation}
Since $\sigma:{(\mathfrak{M}, \mu)}\to (\mathfrak{M}, \mu)$ is ergodic, one should expect any $\sigma$-invariant measurable spectral characteristic to be almost surely constant. In particular, there is a well defined almost sure spectrum \cite{Pa}, etc. An important quantity associated with such a family of operators, $\{H_\omega\}_{\omega \in \frak{M}}$, is given by the integrated density of states, which is defined as follows; compare \cite{as, cfks} or \cite[Section 4.3]{DF1}. Define the measure $dN$ by
\begin{equation}\label{e.idsspectraldef}
\int g(\lambda) dN(\lambda) = \int \langle \delta_0 , g(H_\omega) \delta_0 \rangle \, d\mu(\omega).
\end{equation}
The integrated density of states (IDS), $N$, is then given by
\begin{equation}\label{e.idsspectraldef2}
N(E) = \int \chi_{(- \infty, E]} (\lambda) \, dN(\lambda).
\end{equation}
The terminology is explained by the fact that
\begin{equation}\label{idsform}
N(E) = \lim_{n \to \infty} \frac{\# \{ \text{eigenvalues of } H_{\omega,[1,n]} \le E \}}{n} \; \; \text{ for $\mu$-a.e.\ } \omega \in \Omega,
\end{equation}
where $H_{\omega,[1,n]}$ denotes the restriction of $H_\omega$ to the interval $[1,n]$ with Dirichlet boundary conditions. It is a basic result that the IDS is always continuous \cite{as, Pa}, \cite[Theorem 4.3.6]{DF1}; see \cite{ds} for a very short proof that also works in higher dimensions. For specific models explicit moduli of continuity can be established. For example, for the free Laplacian we have
$$
N(E) = \begin{cases} 0 & E \le -2 \\ \frac{1}{\pi} \arccos \left( - \frac{E}{2} \right) & -2 < E < 2 \\ 1 & E \ge 2, \end{cases}
$$
which is H\"older continuous. For the Anderson Model (i.e. for Schr\"odinger operators with iid random potentials) it is known that the IDS must be H\"older continuous \cite{L}, and under additional assumptions of regularity of the distribution that defines the potential stronger results are available \cite{KS, Ki, ST}. In particular, in the case of compactly supported distribution with polynomially decaying Fourier transform one can show that the IDS must be $C^\infty$, see \cite{CK}. For the Fibonacci Hamiltonian, the IDS must be H\"older continuous \cite{DG1}, while it is not always the case for operators with Sturmian potentials, see \cite{Mun} for details. H\"older continuity of the IDS in the case of quasiperiodic potentials was established in \cite{GS}.

Many spectral properties of the operator (\ref{e.schrod}), including the integrated density of states, can be expressed in terms of the corresponding Schr\"odinger cocycle. In order to define it, for each $\omega\in \mathfrak{M}$ and $E\in \mathbb{R}$ introduce the transfer matrix
$$
A_E(\omega)=\begin{pmatrix}
              E-f(\omega) & -1 \\
              1 & 0 \\
            \end{pmatrix},
$$
and consider the $SL(2, \mathbb{R})$ cocycle
$$
(\sigma, A_E):\mathfrak{M}\times \mathbb{R}^2\to \mathfrak{M}\times\mathbb{R}^2, (\omega, \bar v)\mapsto (\sigma(\omega), A_E(\omega)\bar v),
$$
usually called a Schr\"odinger cocycle corresponding to the family (\ref{e.schrod}). Each linear map $A_E(\omega)$ induces a projective map that we will denote by $g_{E, \omega}:\mathbb{RP}^1\to \mathbb{RP}^1$. It is not hard to see that in the case of bounded function $f$, if $|E|\gg 1$, then the cocycle (\ref{e.cocycle}) is uniformly hyperbolic. In this case one can choose the lifts $\tg_{E, \omega}$ in such a way that the rotation number $\rho(E)$ of the corresponding cocycle is equal to 0 for $E\gg 1$ and to $1/2$ for $E\ll -1$. For a measurable function $f$ one can choose lifts in such a way that $\rho(E)\to 0$ as $E\to +\infty$ and $\rho(E)\to 1/2$ as $E\to -\infty$. In either case, the integrated density of states $N(E)$ can be expressed via the rotation number \cite{ds83}:
$$
N(E)=1-2\rho(E).
$$
Together with Theorem \ref{t.main}, this immediately gives a purely dynamical proof of the following statement:
\begin{theorem}[Theorem 5.2 from \cite{cs}]\label{t.CS}
In the setting above, if the function $f:\mathfrak{M}\to \mathbb{R}^1$ is such that
$$
\int_{\mathfrak{M}}\log(1+ |f(\omega)|)d\mu <\infty,
$$
then the integrated density of states $N(E)$ is log-H\"older continuous, i.e. for any compact $J\subset \mathbb{R}$, for some $C>0$ and any $E_1, E_2\in J$ with $|E_1-E_2|\le 1/2$ one has
$$
| N(E_1) - N(E_2) | \le C \left( \log |E_1 - E_2|^{-1} \right)^{-1}.
$$
\end{theorem}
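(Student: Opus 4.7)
The plan is to apply Theorem~\ref{t.main} directly to the projectivized Schr\"odinger cocycle $(\sigma, g_{E,\omega})$ on $\mathbb{RP}^1\cong S^1$, with the energy $E$ playing the role of the parameter on the given compact interval $J\subset \R$. Once the log-H\"older estimate $|\rho(E')-\rho(E)|\le R(\log|E'-E|^{-1})^{-1}$ on $J$ is obtained, the identity $N(E)=1-2\rho(E)$ recalled just before the statement of Theorem~\ref{t.CS} immediately yields the claimed bound with $C=2R$. So the whole task reduces to verifying the two hypotheses of Theorem~\ref{t.main} for this cocycle.

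Hypothesis~(2) is the routine half. Since $g_{E,\omega}$ is the projective action of $A_E(\omega)\in \mathrm{SL}(2,\R)$, the spatial derivative in the angle parametrization of $\mathbb{RP}^1$ is bounded above by $\|A_E(\omega)^{-1}\|^2=\|A_E(\omega)\|^2$. The explicit form of the transfer matrix gives $\|A_E(\omega)\|\le C_J(1+|f(\omega)|)$ for $E\in J$, hence $\log M_\omega \le \mathrm{const}_J + 2\log(1+|f(\omega)|)$, which is $\mu$-integrable by the Craig--Simon hypothesis on $f$. Continuous-in-$E$ smooth lifts $\tilde g_{E,\omega}$ exist because $A_E$ depends real-analytically on $E$ through a contractible path in $\mathrm{SL}(2,\R)$, as already noted in the excerpt for the $\omega$-variable.

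The only conceptually delicate point is hypothesis~(1): the uniform bound on $|\partial_E \tilde g_{E,\omega}|$. In the naive slope coordinate $s\in\R$ on the affine chart of $\mathbb{RP}^1$ the M\"obius action reads $s\mapsto g:=1/((E-f(\omega))-s)$, whose parameter derivative $\partial_E g=-g^2$ is \emph{not} uniformly bounded. The trick is to pass to the angle coordinate $\phi=\arctan(g)$, identifying $\mathbb{RP}^1$ with $\R/\pi\Z$ and, after rescaling, with $S^1=\R/\Z$: the one-line computation $\partial_E \phi = \partial_E g /(1+g^2) = -g^2/(1+g^2)\in[-1,0]$ yields the uniform bound $|\partial_E \tilde g_{E,\omega}(x)|\le 1/\pi$ regardless of $\omega$, $E\in J$, and the basepoint. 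Recognizing that the correct parametrization makes the parameter derivative of a non-compact family of M\"obius transformations automatically bounded is the main (and in retrospect natural) obstacle; once this is in place, Theorem~\ref{t.main} applies with $C=1/\pi$ and $R$ controlled by $\int \log(1+|f|)\,d\mu$, and the conversion $N=1-2\rho$ closes the argument.
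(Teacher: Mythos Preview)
Your proposal is correct and follows exactly the route the paper intends: the paper itself does not give a separate proof of Theorem~\ref{t.CS} but simply states that it follows from Theorem~\ref{t.main} together with the relation $N(E)=1-2\rho(E)$, and you have supplied the (correct) verification of hypotheses~(1) and~(2) that the paper leaves implicit. Your observation that the angle coordinate tames the unbounded $\partial_E$-derivative of the M\"obius map is precisely the point one needs, and your bound $M_\omega\le \|A_E(\omega)\|^2\le C_J^2(1+|f(\omega)|)^2$ is the intended one.
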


  A multidimensional version of this statement was provided in \cite{CS}. In both \cite{CS} and \cite{cs} the Thouless formula is used as the main tool. The Thouless formula relates the integrated density of states of an ergodic family of Schr\"odiger operators and the Lyapunov exponent of the corresponding Schr\"odinger cocycle; in the non-linear setting of Theorem \ref{t.main} one of the parts of this formula, the Lyapunov exponent, is just not defined. Notice that the original results \cite{CS} and \cite{cs} deal with bounded potentials only, but in \cite{CS} the authors make a remark that their method can be adapted to the case when the function $\log(1+|f|)$ is from $L^1$ space.

An analog of Theorem \ref{t.CS} for CMV matrices was provided in \cite{FO}. Also, in many models the regularity of Lyapunov exponent of the corresponding linear cocycle is related to the regularity of the integrated density of states, and the former was heavily studied, e.g. see   \cite {BJ}, \cite{DKP}, \cite{DK}, just to give a few examples.

The question whether Theorem \ref{t.CS} is optimal was heavily discussed in spectral theory of ergodic Schr\"odinger operators. It turned out that the modulus of continuity of the integrated density of states in general cannot be improved. It was shown that even for Anderson Model the integrated density of states does not have to be H\"older continuous with a given power \cite{Hal}. Also, \cite[Theorem 5]{Cr} essentially claims that for any continuous increasing function $N(E)$  on $[0,1]$ with $N(0)=0, N(1)=1$ that is $\alpha$-log-H\"older continuous with any $\alpha>1$, i.e. just slightly more regular than allowed by Theorem \ref{t.CS}, there exists a family of almost periodic Schr\"odinger operators with an integrated density of states given by the function $N(E)$. Finally, in \cite{KG} the limit periodic potentials were used  to show  that Theorem \ref{t.CS} is sharp, and an estimate in Theorem \ref{t.CS} cannot be replaced by any better modulus of continuity. Later examples in other special classes of ergodic Schr\"odinger operators were constructed as well. So, in \cite{DKS} an example in the class of random Schr\"odinger cocycles was constructed, and in \cite{ALSZ} -- in the class of quasiperiodic operators. In particular, these results imply that the modulus of continuity in Theorem \ref{t.main} is also optimal.

\section*{Acknowledgments}

We are grateful to Jake Fillman for multiple relevant references he provided, as well as for numerous corrections, and Grigorii Monakov for useful comments on the first draft of the paper. The first author is grateful to the American Institute of Mathematics and its support through the SQuaRE program.


\begin{thebibliography}{BDFGVWZ}
\def\bi#1{\bibitem[#1]{#1}}


\bibitem[Arn]{Arn}  V.I.\,Arnold, Small denominators I: On the mapping of a circle into itself, {\it Izv. Akad. Nauk. Math. Serie 25} (1961) pp. 21--86; {\it Transl. Am. Math. Soc.} (2) 46 (1965).


\bibitem[ALSZ]{ALSZ} A.\,Avila, Y.\,Last, M.\,Shamis, Q.\,Zhou, On the abominable properties of the almost Mathieu operator with well-approximated frequencies, {\it Duke Math. J.}, vol. 173 (2024), pp. 603--672.


\bibitem[AS]{as} J.\ Avron, B.\ Simon, Almost periodic Schr\"odinger operators. II.~The integrated density of states, \textit{Duke Math.\ J.} {\bf 50} (1983), 369--391.


\bibitem[BCDFK]{BCDFK} J.\,Bezerra, A.\,Cai, P.\,Duarte, C.\,Freijo, S.\,Klein, A dynamical Thouless formula, {\it Advances in Mathematics}, vol. 438 (2024), 109446


\bibitem[BJ]{BJ} J.\,Bourgain, S.\,Jitomirskaya, Continuity of the Lyapunov exponent for quasiperiodic operators with analytic potential, {\it 
J. Statist. Phys.}, vol. 108 (2002),pp. 1203--1218.

\bibitem[Bru]{Bru} P.\,Brunovsky,
Generic properties of the rotation number of one-parameter diffeomorphisms of
the circle, {\it
Czechoslovak Mathematical Journal}, vol. 24 (1974), pp. 74--90.

\bibitem[CK]{CK} M.\ Campanino, A.\ Klein, A supersymmetric transfer matrix and differentiability of the density of states in the one-dimensional Anderson model, {\it Commun.Math. Phys.}, vol. 104 (1986), pp. 227--241.

\bibitem[Cr]{Cr} W.\,Craig, Pure point spectrum for discrete almost periodic Schr\"odinger operators, {\it Comm. Math. Phys.}, vol. 88 (1983),  pp. 113--131.

\bibitem[CS1]{CS} W.\,Craig, B.\,Simon, Log H\"older continuity of the integrated density of states for stochastic Jacobi matrices, {\it Commun.Math. Phys.} {\bf 90} (1983), pp. 207--218.


\bibitem[CS2]{cs} W.\ Craig, B.\ Simon, Subharmonicity of the Lyaponov index, \textit{Duke Math.\ J.} {\bf 50} (1983), 551--560.

\bibitem[CFKS]{cfks} H.\ L.\ Cycon, R.\ G.\ Froese, W.\ Kirsch, B.\ Simon, \textit{Schr\"odinger Operators with Application to Quantum Mechanics and Global Geometry}, Texts and Monographs in Physics, Springer-Verlag, Berlin (1987).

\bibitem[DG1]{DG1} D.\ Damanik, A.\ Gorodetski, H\"older continuity of the integrated density of states for the Fibonacci Hamiltonian,  \textit{Commun.\ Math.\ Phys.}, vol. 323 (2013), pp. 497--515.

\bibitem[DS1]{ds83} F.\ Delyon, B.\ Souillard, The rotation number for finite difference operators and its properties, \textit{Comm. Math. Phys.}, {\bf 89} (1983), pp. 415--426.


\bibitem[DS2]{ds} F.\ Delyon, B.\ Souillard, Remark on the continuity of the density of states of ergodic finite difference operators, \textit{Commun.\ Math.\ Phys.} {\bf 94} (1984), 289--291.








\bibitem[DF1]{DF1} D.~Damanik, J.~Fillman, One-Dimensional Ergodic Schrödinger Operators: I. General Theory, {\it Graduate Studies in Mathematics}, vol. 221, 2022.


\bibitem[DF2]{DF2} D.~Damanik, J.~Fillman, One-Dimensional Ergodic Schrödinger Operators: II. Specific Classes, {\it Graduate Studies in Mathematics}, vol. 249, 2024.



\bibitem[DK]{DK} P.\,Duarte, S.\,Klein, 
Lyapunov exponents of linear cocycles. 
Continuity via large deviations, {\it 
Atlantis Stud. Dyn. Syst., 3}, 
Atlantis Press, Paris, 2016. xiii+263 pp.

\bibitem[DKS]{DKS} P.\,Duarte, S.\,Klein, M.\,Santos, A random cocycle with non H\"older Lyapunov exponent, {\it Discrete Contin. Dyn. Syst.}, vol. 39 (2019), pp. 4841--4861.
    
\bibitem[DKP]{DKP} P.\,Duarte, S.\,Klein, M.\,Poletti, 
H\"older continuity of the Lyapunov exponents of linear cocycles over hyperbolic maps, {it Mathematische Zeitschrift}, vol. 302 (2022), pp. 2285--2325.




\bibitem[FO]{FO} J.\,Fillman, D.\,Ong, Purely singular continuous spectrum for limit-periodic CMV operators with applications to quantum walks, {\it Journal of Functional Analysis}, vol. 272 (2017), pp. 5107--5143.

\bibitem[GS]{GS} M.\,Goldstein, W.\,Schlag, H\"older continuity of the integrated density of states for quasi-periodic Schr\"odinger equations and averages of shifts of subharmonic functions, {\it Annals of Mathematics}, vol. 154 (2001), pp. 155--203.
    
\bibitem[GK]{GK} A.\,Gorodetski, V.\,Kleptsyn, Parametric Furstenberg Theorem on random products of $SL(2, \mathbb{R})$
 matrices, {\it  Advances in Mathematics}, vol.378 (2021), 107522.

\bibitem[Gr]{Gr} J.\,Graczyk, Harmonic scaling for smooth families of diffeomorphisms of the circle, {\it Nonlinearity}, vol. 4 (1991), pp. 935--959.

\bibitem[GrS]{GrS} J.\,Graczyk, G.\,Swiatek, Critical circle maps near bifurcation, {\it Communications in Mathematical Physics}, vol. 176 (1996), pp. 227--260.







\bibitem[Hal]{Hal} B.\,Halperin, Properties of a particle in a one-dimensional random potential, {\it Adv. Chem. Phys.}, vol. 31 (1967), pp. 123--177.

\bibitem[Her1]{Her1}  M.\ Herman, Sur la conjugaison differentiable des difféomorphismes du cercle a des rotations, {\it Publ. Math. Inst. Hautes Etudes Sci.}, vol. 49 (1979), pp. 5--234.

\bibitem[Her]{Her} {M.\,Herman,  Une methode pour minorer les exposants de Lyapounov et quelques exemples montrant le caractere local d'un theoreme d'Arnold et de Moser sur le tore de dimension 2, {\it
Commentarii Mathematici Helvetici} {\bf 58} (1983), pp. 453--502.}


\bibitem[KH]{KH} A.\ Katok, B.\ Hasselblatt, \textit{Introduction to the Modern Theory of Dynamical Systems}, Cambridge University Press, Cambridge, 1995.

\bibitem[Kh]{Kh} K.\ Khanin, Universal estimates for critical circle mappings, {\it Chaos}, vol. 1 (1991), pp. 181--186.

\bibitem[Ki]{Ki} W.\,Kirsch,  An invitation to random Schrödinger operators, 
{\it Panor. Syntheses}, 25, Societe Mathematique de France, Paris, 2008.

\bibitem[KS]{KS}    A.\,Klein, A.\,Speis, Regularity of the invariant measure and of the density of states in the one-dimensional Anderson model, {\it J. Funct. Anal.}, vol. 88(1990), pp. 211--227.


\bibitem[KG]{KG} H.\,Kr\"uger, Z.\,Gan,  Optimality of log-H\"older continuity of the integrated density of states, {\it Math. Nachr.}, vol. 284 (2011), pp. 1919--1923.



\bibitem[L]{L} E.~Le Page, State distribution of a random Schrödinger operator. Empirical distribution of the eigenvalues of a Jacobi matrix, {\it Probability measures on groups, VII (Oberwolfach, 1983)}, pp. 309 -- 367, in {\it Lecture Notes in Math.}, 1064
Springer-Verlag, Berlin, 1984


\bibitem[Mat]{Mat} S.\,Matsumoto, Derivatives of rotation number of one parameter families of circle diffeomorphisms, {\it Kodai Math. J.}, vol. 35 (2012), pp. 115--125.




\bibitem[Mon]{Mon} G.\,Monakov, Log-H\"older regularity of stationary measures, preprint, arXiv:2407.00344


\bibitem[Mun]{Mun} P.\,Munger,  Frequency dependence of H\"older continuity for quasiperiodic Schr\"odinger operators, {\it
J. Fractal Geom.}, vol. 6 (2019), pp. 53--65.




\bibitem[Pa]{Pa}  L.A.\,Pastur, Spectral properties of disordered systems in the one-body approximation, {\it Comm. Math. Phys.}, vol. 75 (1980), pp. 179--196.

\bibitem[Po]{Po} H. Poincar\'e, Sur les courbes d\'efinies par les \'equations diff\'erentielles (III), \emph{Journal de Math\'ematiques Pures et Appliqu\'ees, 4e s\'erie}, \textbf{1}(1885), pp.~167--244.




\bibitem[R]{R2} {D.\,Ruelle, Rotation numbers for diffeomorphisms and flows, {\it Annales de l'I.H.P. Physique theorique} {\bf 42} (1985), pp. 109--115.}




\bibitem[ST]{ST} B.\,Simon, M.\,Taylor, Harmonic analysis on $SL(2, \mathbb{R})$ and smoothness of the density of states in the one-dimensional Anderson model, {\it Comm. Math. Phys.}, vol. 101 (1985), pp. 1--19.

\end{thebibliography}
\end{document}